\theoremstyle{plain}
\newtheorem{theo}{Theorem}
\newtheorem*{theo*}{Theorem}
\newtheorem{prop}{Proposition}
\newtheorem*{lem*}{Lemma}
\theoremstyle{definition}
\newtheorem{lem}{Lemma}[section]
\newtheorem{prob}{Problem}[section]
\theoremstyle{remark}
\newtheorem*{ack}{Acknowledgements}
\newtheorem*{rem*}{Remark}
\newtheorem{rem}{Remark}[section]
\begin{document}

\title{On invariant sets in Lagrangian graphs}
\author{Xiaojun Cui \hspace{0.6cm} Lei Zhao}
\address{Xiaojun Cui \endgraf
Department of Mathematics,\endgraf Nanjing University,\endgraf
Nanjing, 210093,\endgraf Jiangsu Province, \endgraf People's
Republic of China.} \email{xjohncui@gmail.com}
\curraddr{Mathematisches Institut, Albert-Ludwigs-Universit\"{a}t, Eckerstrasse 1, 79104, Freiburg im Breisigau, Germany}

\address{Lei Zhao \endgraf
 IMCCE,\endgraf 77 Avenue Denfert-Rochereau \endgraf   Paris, 75014 \endgraf \& Department of Mathematics, \endgraf Paris 7 University \endgraf Paris 75013}
 \email{zhaolei@imcce.fr}
\thanks{The first author is supported by National Natural Science Foundation of China
 (Grant 10801071) and  research fellowship for postdoctoral researchers
 from the Alexander von Humboldt Foundation.}
\maketitle

\abstract In this exposition, we show that a Hamiltonian is always constant on a compact invariant connected subset which lies in  a Lagrangian graph provided that the Hamiltonian and the graph are smooth enough. We also provide some counterexamples for the case that the Hamiltonians are not smooth enough.

\endabstract

\section{Introduction}

Let $M$ be a closed, connected $C^{\infty}$  manifold of dimension $d$, and $T^*M$ be the cotangent bundle of $M$. We always assume that Hamiltonian $T^*M \rightarrow \mathbb{R}$ is $C^r$ smooth ($r \geq 1$). We denote the associated Hamiltonian vector field and Hamiltonian flow by $X_H$ and $\phi^t_H$ respectively.

In Hamiltonian dynamics, the following result is well known:

\textbf{Let $\Gamma$ be an invariant (under the Hamiltonian flow $\phi^t_H$) $C^1$ Lagrangian graph, then H is constant on $\Gamma$.}

In fact, if $\Gamma$ is only Lipschitz, the result still holds \cite{So}, i.e.,
\begin{prop}
Let $\Gamma$ be an invariant (under the Hamiltonian flow $\phi^t_H$) Lipschitz Lagrangian graph, then H is constant on $\Gamma$.
\end{prop}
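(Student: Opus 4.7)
Since $\Gamma$ is a Lipschitz graph over $M$, it can be written as $\Gamma = \{(x, \eta(x)) : x \in M\}$ for a Lipschitz $1$-form $\eta$ on $M$. By Rademacher's theorem, $\eta$ is differentiable at almost every point of $M$, and the Lagrangian condition on $\Gamma$ translates (via the natural symplectic form $-d\lambda$ on $T^{*}M$) into the statement that $\eta$ is closed in the distributional sense; by Rademacher this gives $\partial_{j}\eta_{i}(x) = \partial_{i}\eta_{j}(x)$ for almost every $x$ in any local chart. Define the Lipschitz function $h : M \to \mathbb{R}$ by $h(x) = H(x,\eta(x))$. Since $M$ is connected, it suffices to prove that $dh = 0$ almost everywhere, for then $h$ is constant on $M$ and hence $H$ is constant on $\Gamma$.

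The crucial ingredient is to convert flow-invariance of $\Gamma$ into a pointwise condition on the a.e.-defined approximate tangent plane to $\Gamma$. Fix $x \in M$ at which $\eta$ is differentiable. The curve $t \mapsto \phi_{H}^{t}(x,\eta(x))$ lies on $\Gamma$ by invariance, so projecting to $M$ gives a Lipschitz curve $\gamma(t) = \pi \circ \phi_{H}^{t}(x,\eta(x))$ along which $\phi_{H}^{t}(x,\eta(x)) = (\gamma(t), \eta(\gamma(t)))$. Differentiating at $t = 0$, which is legitimate because $H$ is $C^{r}$ with $r \geq 1$ and $\eta$ is differentiable at $x$, yields in local coordinates
\begin{equation*}
\partial_{p_{i}}H(x,\eta(x)) = \dot{\gamma}^{i}(0), \qquad -\partial_{x^{i}}H(x,\eta(x)) = \sum_{j} \partial_{j}\eta_{i}(x)\,\partial_{p_{j}}H(x,\eta(x)).
\end{equation*}
Thus the tangency condition holds pointwise on a full-measure set.

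Now on the same full-measure set, the chain rule applied to $h = H \circ (\mathrm{id},\eta)$ gives
\begin{equation*}
\partial_{k} h(x) = \partial_{x^{k}}H(x,\eta(x)) + \sum_{i} \partial_{p_{i}}H(x,\eta(x))\,\partial_{k}\eta_{i}(x).
\end{equation*}
Using closedness $\partial_{k}\eta_{i}(x) = \partial_{i}\eta_{k}(x)$ in the second term and then the tangency identity above with $i$ replaced by $k$, the right-hand side becomes $\partial_{x^{k}}H(x,\eta(x)) - \partial_{x^{k}}H(x,\eta(x)) = 0$. Hence $dh = 0$ a.e., and since $h$ is Lipschitz on the connected manifold $M$, it is constant.

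\textbf{Main obstacle.} The only non-routine step is the a.e. tangency identity: one must know that the Lipschitz graph $\Gamma$ admits the classical tangent plane $\{(v, d\eta(x)\cdot v)\}$ at a.e.\ point and that invariance of $\Gamma$ forces $X_{H}(x,\eta(x))$ to lie in this plane at these points. This is precisely where Rademacher's theorem enters, together with the fact that the $C^{r}$ Hamiltonian flow preserves $\Gamma$ setwise so that the projected curve $\gamma(t)$ provides a bona fide tangent vector to $\Gamma$ at $(x,\eta(x))$ whenever $\eta$ is differentiable at $x$. Once this is in place the remainder of the argument is a direct calculation.
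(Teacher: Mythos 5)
Your proposal is correct and takes essentially the same route as the paper's proof in the Appendix: Rademacher gives a.e.\ differentiability of the graph, flow-invariance plus the Lagrangian condition force $dh=0$ at every point of differentiability (your coordinate computation combining $\partial_k\eta_i=\partial_i\eta_k$ with the tangency identity is exactly the paper's Lemma 2.2 argument $dh\cdot v=-\omega(X_H,\Gamma_*v)=0$ written out in a chart), and a Lipschitz function with a.e.\ vanishing differential on a connected manifold is constant. The only difference is presentational, not mathematical.
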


We always assume  the Lagrangian graphs we consider are at least $C^1$, unless other stated. After this proposition, it is naturally then to pose the following problem:
\begin{prob}
If $\Lambda$ is a compact, connected, invariant (under $\phi^t_H$) set, and $\Lambda \subseteq \Gamma$, then is $H$ constant on $\Lambda$?
\end{prob}
In the case $\Lambda \neq \Gamma$, the answer to this problem is not obvious, since the structure of $\Lambda$ could be very complicated. We will study this problem concretely in this short exposition.

 We denote the projection of $\Lambda$ into $M$ by $\Lambda_0$.

More precisely, we have:
\begin{theo} If  $h(q):=H(q,\Gamma(q)) \in C^{d^{\prime},s}(M,\mathbb{R})$  with $d^{\prime} \geq d$, or $d^{\prime}=d-1$ and $s=1$, then H is constant on $\Lambda$.
\end{theo}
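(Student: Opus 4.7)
The plan is to reduce the statement to showing that $h$ is constant on the projection $\Lambda_0:=\pi(\Lambda)\subset M$, which I will accomplish by proving two facts: that $\Lambda_0$ lies in the critical set of $h$, and that the hypothesized regularity is sharp enough for a Morse--Sard theorem to apply.

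First I would verify that $dh$ vanishes on $\Lambda_0$. Fix $q\in\Lambda_0$, so $(q,\Gamma(q))\in\Lambda$. Since $\Lambda\subset\Gamma$ is invariant under $\phi^t_H$, the integral curve of $X_H$ through $(q,\Gamma(q))$ stays in $\Gamma$ for small times, hence $X_H(q,\Gamma(q))=(\partial_p H,-\partial_q H)$ is tangent to the graph $\Gamma$ at that point; this tangency amounts to
\[
-\partial_q H(q,\Gamma(q))=D\Gamma(q)\,\partial_p H(q,\Gamma(q)).
\]
Differentiating $h(q)=H(q,\Gamma(q))$ and using the Lagrangian symmetry $\partial_{q_i}\Gamma_j=\partial_{q_j}\Gamma_i$ yields
\[
\partial_{q_i}h(q)=\partial_{q_i}H+\sum_{j}\partial_{q_i}\Gamma_j\,\partial_{p_j}H=\partial_{q_i}H+(D\Gamma(q)\,\partial_p H)_i=0,
\]
so $\Lambda_0$ is contained in the critical set $\mathrm{Crit}(h)$.

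Next I would invoke the appropriate version of the Morse--Sard theorem for real-valued maps on a $d$-dimensional manifold. The classical form covers the case $d'\geq d$, while the borderline case $d'=d-1$, $s=1$ is exactly the content of the Bates refinement of Morse--Sard. Either way, $h(\mathrm{Crit}(h))$, and in particular $h(\Lambda_0)$, has Lebesgue measure zero in $\mathbb R$.

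Finally, $\Lambda_0$ is compact and connected (being a continuous image of $\Lambda$), so $h(\Lambda_0)$ is a connected subset of $\mathbb R$ of measure zero and therefore a single point. Hence $h$ is constant on $\Lambda_0$, and $H$ is constant on $\Lambda$. The only delicate point is the invocation of Morse--Sard at the threshold regularity $C^{d-1,1}$, which is exactly what forces the precise hypothesis in the statement; the tangency computation above is a direct consequence of the Lagrangian property of $\Gamma$ together with the invariance of $\Lambda$.
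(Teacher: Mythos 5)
Your proposal is correct and follows essentially the same route as the paper: show via invariance and the Lagrangian property that $\Lambda_0$ lies in the critical set of $h$ (your coordinate computation using $\partial_{q_i}\Gamma_j=\partial_{q_j}\Gamma_i$ is just the paper's symplectic identity $dh\cdot v=-\omega(X_H,\Gamma_*v)=0$ written out), then apply Bates' refinement of the Morse--Sard theorem together with connectedness of $\Lambda_0$ to conclude that $h(\Lambda_0)$ is a single point.
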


\begin{rem}
    Actually the conclusion of the former theorem still holds under weaker conditions, for example $h \in C^{d-1, Zygmund}$, i.e., the $d-1$ order derivatives of $h$ is smooth in the sense of Zygmund (see \cite{zy} for details).
\end{rem}

We say $\Gamma$  is a Lipschitz Lagrangian graph, if $\Gamma$ coincides with the differential of a $C^{1,1}$ function locally. Then, we have

\begin{rem}
    In the case of 1 degree of freedom,  one can show that if $\Lambda$ is a compact, connected, invariant set under $\phi^t_H$, and $\Lambda$ lies in a Lipschitz Lagrangian graph, then $H|_{\Lambda}$ is constant.
    \end{rem}

    \begin{rem}
If $\Lambda_0$ admits some special structures, e.g., Lipschitz lamination, lower Hausdorff dimension, semi-analytic or semi-algebraic, then $H$ is still constant on $\Lambda$ under some weaker (than Theorem 1) smooth hypothesis. We refer to \cite{fa},\cite{cr}, for more details.
\end{rem}

Among these cases stated in Remark 1.3, the most interesting case is

\begin{theo}
If for any two points in $\Lambda$, there is a rectifiable  path in $\Lambda$ which connects them, then $H$ is constant on $\Lambda$.
\end{theo}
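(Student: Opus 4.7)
The plan is to carry out the argument in two steps: first show that the differential $dh$ vanishes at every point of $\Lambda_0 := \pi(\Lambda)$, where $\pi : T^*M \to M$ is the canonical projection; then use the rectifiable path hypothesis to upgrade this pointwise information into constancy of $h$ on $\Lambda_0$.

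\textbf{Step 1 (pointwise vanishing of $dh$ on $\Lambda_0$).} In a local chart, represent $\Gamma$ as a $C^1$ closed $1$-form (still denoted $\Gamma$), so that $h(q) = H(q, \Gamma(q))$. A direct computation, using symmetry of the Jacobian $D\Gamma$ (which is equivalent to $d\Gamma = 0$), yields
\[
  dh(q) = \partial_q H(q, \Gamma(q)) + D\Gamma(q) \cdot \partial_p H(q, \Gamma(q)).
\]
Invariance of $\Lambda$ enters now: since $\Lambda \subseteq \Gamma$ is $\phi^t_H$-invariant, any orbit through $(q, \Gamma(q)) \in \Lambda$ remains in $\Gamma$, and differentiating at $t = 0$ gives $X_H(q, \Gamma(q)) \in T_{(q,\Gamma(q))}\Gamma$. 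Comparing $X_H = (\partial_p H, -\partial_q H)$ with the generic tangent vector $(v, D\Gamma(q)\, v)$ to $\Gamma$ yields the identity
\[
  D\Gamma(q) \cdot \partial_p H(q, \Gamma(q)) = -\partial_q H(q, \Gamma(q)) \qquad \text{for every } q \in \Lambda_0.
\]
Substituting back into the expression for $dh$ gives $dh(q) = 0$ for every $q \in \Lambda_0$.

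\textbf{Step 2 (constancy via rectifiable paths).} Vanishing of $dh$ on a connected set does not in general imply that $h$ is constant there: Whitney famously constructed a $C^1$ function that is non-constant on a connected set of critical points. The role of the rectifiable-path hypothesis is precisely to exclude such pathology. Given $x_0, x_1 \in \Lambda$, take a rectifiable path in $\Lambda$ joining them and reparameterize by arc length to obtain a Lipschitz curve $\gamma : [0, L] \to \Lambda$. Its projection $\gamma_0 := \pi \circ \gamma : [0, L] \to \Lambda_0$ is Lipschitz, hence differentiable almost everywhere, and the composition $h \circ \gamma_0$ is therefore Lipschitz with
\[
  (h \circ \gamma_0)'(t) = dh(\gamma_0(t))\bigl(\dot\gamma_0(t)\bigr) = 0
\]
for a.e.\ $t$, using Step 1 together with $\gamma_0(t) \in \Lambda_0$. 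Absolute continuity of $h \circ \gamma_0$ then forces it to be constant, whence $h(\pi(x_0)) = h(\pi(x_1))$.

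The conceptual heart of the proof is Step 1: invariance of $\Lambda$ together with the Lagrangian graph structure forces $X_H$ to be tangent to $\Gamma$ all along $\Lambda$, and this tangency, combined with closedness of $\Gamma$, makes $dh$ vanish identically on $\Lambda_0$. Step 2 is then a fundamental-theorem-of-calculus argument, and it is precisely the rectifiability assumption that allows this argument to go through and sidestep the Whitney-type obstruction that would remain if only topological path-connectedness were assumed. The only technical point to keep in mind is that $\Gamma$ must be regular enough for $D\Gamma$ to exist pointwise at points of $\Lambda_0$, which is ensured by the standing $C^1$ assumption on the Lagrangian graph.
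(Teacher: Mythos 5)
Your proof is correct and takes essentially the same route as the paper: the paper likewise combines the tangency of $X_H$ to the Lagrangian graph (its Lemma 2.1 argument) with a fundamental-theorem-of-calculus integration along the arc-length-parameterized rectifiable path, merely working upstairs with $dH\cdot\dot{\beta}(t)=0$ along the path $\beta\subseteq\Lambda$ rather than projecting to $dh=0$ on $\Lambda_0$. Your Step 2 usefully spells out the a.e.\ chain rule and the absolute continuity of the composition that the paper's one-line integration leaves implicit.
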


In the case that $H$ is not so smooth, we have the following:

\begin{theo} Assume that $d \geq 2$. For $d^{\prime}< d-1$ and $s \in [0,1]$ or $d^{\prime}=d-1$ and $s \in [0,1)$ , there exist examples with $H\in C^{d^{\prime},s}(T^*M,\mathbb{R})$ such that $H$ is non constant on $\Lambda$.
\end{theo}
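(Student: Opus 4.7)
My strategy is to exhibit all the counterexamples in the simplest possible form $H(q,p) = h(q) + \tfrac{1}{2}|p|^2$ on $T^*\mathbb{T}^d$, with $\Gamma$ taken to be the smooth zero section $\{p=0\}$, a Lagrangian graph. The Hamilton equations then read $\dot q = p$, $\dot p = -\nabla h(q)$, so a point $(q_0, 0)\in\Gamma$ is fixed by $\phi^t_H$ if and only if $q_0$ is a critical point of $h$. Consequently, for any subset $K\subset\mathrm{crit}(h)$, the set $\Lambda := K\times\{0\}\subset\Gamma$ is automatically invariant under $\phi^t_H$ and inherits compactness and connectedness from $K$. Since $h$ equals $H|_{\Gamma}$ under the obvious identification, the problem reduces to a purely real-variable statement: for each admissible pair $(d',s)$, produce a function $h\in C^{d',s}(\mathbb{T}^d,\mathbb{R})$ admitting a compact, connected subset $K$ of $\mathrm{crit}(h)$ on which $h$ takes at least two distinct values.

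Such $h$ is provided by the classical Whitney counterexample to the Morse--Sard theorem and its refinements, whose sharp regularity threshold $C^{d-1,1}$ is exactly the borderline case covered by Theorem~1. Whitney's original 1935 construction yields, for $d=2$, a $C^1$ function on $\mathbb{R}^2$ whose critical set contains a compact, connected, non-rectifiable arc along which the function is non-constant; the non-rectifiability is forced by Theorem~2. To exhaust the full admissible range, I would combine two classical operations on this seed example. First, a product-with-bump in extra variables: if $h_0\in C^{d',s}(\mathbb{R}^{d_0})$ already exhibits the desired critical behaviour, then $h(q_1,q_2) := h_0(q_1)\,\varphi(q_2)$, with $\varphi$ smooth, compactly supported, and identically $1$ near the origin, is again of class $C^{d',s}$ on $\mathbb{R}^d$, while $\mathrm{crit}(h)\supset K\times\{0\}$ and $h$ remains non-constant on this set. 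Second, the finer Whitney-type constructions of Kaufman, Norton, and Bates, which produce, in any dimension and for any prescribed $(d',s)$ strictly below the Bates threshold, a compact, connected critical arc of suitably controlled box-counting dimension on which the function is non-constant. Combining these covers all $(d',s)$ with $d'<d-1$, $s\in[0,1]$, or $d'=d-1$, $s\in[0,1)$. One finally transplants $h$ into a small coordinate chart of $\mathbb{T}^d$ by multiplication with a smooth cutoff supported there, and assembles $H$ as above.

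The main obstacle is the precise calibration of the H\"older class: one must verify that the constructed $h$ lies in $C^{d',s}$ and in no strictly better class, which amounts to tight estimates on both the box-counting dimension of the Whitney arc and the remainder in the associated jet interpolation. These are classical and delicate, and I would invoke the published constructions rather than reproducing them. Once $h$ is in hand, verifying that $\Lambda=K\times\{0\}$ is a compact, connected, $\phi^t_H$-invariant subset of the smooth Lagrangian graph $\Gamma$ on which $H$ is non-constant is immediate from the definitions.
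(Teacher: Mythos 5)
Your proposal is correct and follows essentially the same route as the paper: take a Whitney--Norton function $f\in C^{d',s}$ that is non-constant on a compact connected set of critical points, form the mechanical Hamiltonian $H(q,p)=f(q)+\tfrac{1}{2}|p|^2$, and observe that the critical set sits in the zero section (a smooth Lagrangian graph) as a set of fixed points of the flow. The extra devices you mention (product-with-bump, exhausting all $(d',s)$ separately) are harmless but unnecessary, since a single $C^{d-1}$ Whitney example already lies in $C^{d',s}$ for all $d'<d-1$, $s\in[0,1]$, and Norton's examples handle $d'=d-1$, $s\in[0,1)$, exactly as the paper cites.
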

\begin{rem}
These examples show that the condition in Theorem 1 is optimal in some sense.
\end{rem}

\section{Proof of Theorem 1}
 The following lemma is an easy consequence of the flow-invariance of $\Lambda$:

    \begin{lem}
   If  $\Gamma$ is  $C^{1}$ smooth, then $\Lambda_0$ is contained in the critical set of  $h$.
\end{lem}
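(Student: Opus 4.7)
The plan is to compute $\nabla h$ at a point $q \in \Lambda_0$ via the chain rule and to show that the resulting expression vanishes thanks to Hamilton's equations and the Lagrangian condition on $\Gamma$.

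First, I would pick $(q,p) \in \Lambda$ with $p = \Gamma(q)$, work in local coordinates on $M$, and apply the chain rule to $h(q) = H(q,\Gamma(q))$ to obtain
\[
\nabla h(q) \;=\; \partial_q H(q,\Gamma(q)) + (D\Gamma(q))^T\,\partial_p H(q,\Gamma(q)).
\]
Because $\Gamma$ is a Lagrangian graph, $\Gamma$ is (locally) a closed $1$-form, so the Jacobian matrix $D\Gamma(q)$ is symmetric; consequently $(D\Gamma)^T = D\Gamma$, and the second term above equals $D\Gamma(q)\cdot \partial_p H(q,\Gamma(q))$.

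Next I would exploit the invariance of $\Lambda$ under $\phi^t_H$. Since $\Lambda \subseteq \Gamma$, the orbit $(q(t),p(t))$ through $(q,p)$ satisfies simultaneously Hamilton's equations $\dot q = \partial_p H$, $\dot p = -\partial_q H$ and the graph condition $p(t) = \Gamma(q(t))$. Differentiating the latter gives $\dot p = D\Gamma(q)\,\dot q$, hence, at $t=0$,
\[
-\partial_q H(q,\Gamma(q)) \;=\; D\Gamma(q)\cdot \partial_p H(q,\Gamma(q)).
\]

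Substituting this identity into the chain-rule formula from the first step yields $\nabla h(q) = 0$, which is exactly the assertion that $q \in \Lambda_0$ lies in the critical set of $h$. There is essentially no obstacle here; the main point is the observation that the symmetry of $D\Gamma$ forced by the Lagrangian (closed $1$-form) condition is precisely what makes the chain-rule expression for $\nabla h$ coincide with the combination that vanishes along any orbit confined to the graph. The $C^1$ regularity of $\Gamma$ is used only to justify differentiating the graph relation $p(t) = \Gamma(q(t))$.
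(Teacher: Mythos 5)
Your proof is correct and is essentially the paper's argument written in local coordinates: the symmetry of $D\Gamma$ (the closed $1$-form condition) is exactly the coordinate expression of the fact that $T\Gamma$ is Lagrangian, and your substitution of $-\partial_q H = D\Gamma\,\partial_p H$ into the chain rule is the coordinate version of the paper's computation $dh\cdot v = -\omega(X_H,\Gamma_* v)=0$ for two vectors in the same Lagrangian subspace.
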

\begin{proof}
We will prove $dh(q_0)=0$ for any point $q_0 \in \Lambda_0$. For this, we only need to show that $dh(q_0) \cdot v=0$ for any $ v\in T_{q_0}M$. Now we also regard $\Gamma$ as a map from $M$ to $T^*M$, then $dh(q_0) \cdot v=dH(q_0,\Gamma(q_0)) \cdot \Gamma_* v$, here $\Gamma_* v \in T_{(q_0,\Gamma(q_0))}{\Gamma}$. Since $\Lambda$ is invariant under the flow $\phi^t_H$, we have $X_{H}(q_0,\Gamma(q_0)) \in T_{(q_0,\Gamma(q_0))}{\Gamma}$. Note that $T_{(q_0,\Gamma(q_0))}{\Gamma}$ is a Lagrangian subspace, we have
$$dh(q_0) \cdot v=dH(q_0,\Gamma(q_0)) \cdot \Gamma_* v=-\omega(X_H,\Gamma_* v)=0.$$

\end{proof}

Clearly, we may generalize Lemma 2.1 to
    \begin{lem}  If $\Gamma$ is Lipschitz,  then every differentiable points contained in $\Lambda_0$ is critical for $h$.
    \end{lem}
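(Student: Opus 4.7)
The plan is to replay the argument of Lemma 2.1 pointwise at a differentiability point $q_0 \in \Lambda_0$ of the Lipschitz graph $\Gamma$. The $C^1$ hypothesis was used in Lemma 2.1 only for two things: (i) to ensure that $T_{(q_0,\Gamma(q_0))}\Gamma$ is a Lagrangian subspace, and (ii) to invoke the chain rule for the composition $H\circ \Gamma$. For (ii), differentiability of $\Gamma$ at $q_0$ together with $H\in C^1$ immediately yields differentiability of $h$ at $q_0$ with $dh(q_0)\cdot v=dH(q_0,\Gamma(q_0))\cdot \Gamma_* v$. For (i), write $\Gamma=d\phi$ locally with $\phi\in C^{1,1}$; the tangent space at $(q_0,\Gamma(q_0))$ is then $\{(v,Av):v\in T_{q_0}M\}$ where $A=D(\nabla\phi)(q_0)$, and this is Lagrangian precisely when $A$ is symmetric. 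Symmetry is the crucial input: it follows from the fact that the distributional Hessian of a $C^{1,1}$ function is symmetric and coincides with the classical derivative of $\nabla\phi$ wherever the latter exists.

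Next I would verify that $X_H(q_0,\Gamma(q_0))\in T_{(q_0,\Gamma(q_0))}\Gamma$. Since $\Lambda\subseteq \Gamma$ is invariant, the orbit $\phi_H^t(q_0,\Gamma(q_0))$ lies in $\Gamma$ for all small $t$ and so may be written as $(q(t),\Gamma(q(t)))$, where $q(t):=\pi\circ \phi_H^t(q_0,\Gamma(q_0))$ is smooth in $t$ with $q'(0)=\pi_* X_H(q_0,\Gamma(q_0))$. Differentiability of $\Gamma$ at $q_0=q(0)$ together with the chain rule then gives
\[
X_H(q_0,\Gamma(q_0)) = \frac{d}{dt}\Big|_{t=0}(q(t),\Gamma(q(t))) = \Gamma_*|_{q_0}\, q'(0) \in T_{(q_0,\Gamma(q_0))}\Gamma.
\]

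The conclusion is then the same calculation as in Lemma 2.1: for every $v\in T_{q_0}M$,
\[
dh(q_0)\cdot v = dH(q_0,\Gamma(q_0))\cdot \Gamma_* v = -\omega(X_H,\Gamma_* v) = 0,
\]
since $X_H(q_0,\Gamma(q_0))$ and $\Gamma_* v$ both lie in the Lagrangian subspace $T_{(q_0,\Gamma(q_0))}\Gamma$ on which $\omega$ vanishes. The main obstacle is the Lagrangianness of the tangent space at a differentiability point: this is where the structural hypothesis that $\Gamma$ is locally the graph of the differential of a $C^{1,1}$ function (and not merely any Lipschitz section of $T^*M$) enters essentially; once this is granted, the $C^1$ argument applies verbatim at every differentiability point of $\Gamma$ inside $\Lambda_0$.
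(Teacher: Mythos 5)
Your proposal is correct and is exactly the argument the paper intends: the paper gives no proof of this lemma at all (it is introduced with ``Clearly, we may generalize Lemma 2.1 to\dots''), and your write-up simply reruns the Lemma 2.1 computation at a differentiability point of $\Gamma$, which is the right thing to do. You also correctly identify the one genuinely nontrivial point, namely that the tangent plane $\{(v,Av)\}$ at a differentiability point is Lagrangian, i.e.\ that $A=D(\nabla\phi)(q_0)$ is symmetric. The only step I would tighten is your justification of this symmetry: the identification of the distributional Hessian of a $C^{1,1}$ function with the pointwise derivative of $\nabla\phi$ is only an almost-everywhere statement, so as written it does not apply at the particular point $q_0$, which could a priori lie in the exceptional null set. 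The conclusion is nevertheless true at \emph{every} differentiability point and has an elementary proof: since $\phi\in C^1$, the integral of $d\phi$ around the boundary of the small coordinate square with corner $q_0$ and sides $\epsilon e_i$, $\epsilon e_j$ vanishes; substituting $\nabla\phi(q)=\nabla\phi(q_0)+A(q-q_0)+o(|q-q_0|)$ into this line integral yields $(A_{ji}-A_{ij})\epsilon^{2}+o(\epsilon^{2})=0$, hence $A_{ij}=A_{ji}$. With that repair, the remaining steps of your argument --- the chain rule for $h=H\circ(\mathrm{id},\Gamma)$ at a differentiability point and the verification that $X_H(q_0,\Gamma(q_0))$ lies in the tangent plane by differentiating the orbit inside $\Gamma$ --- are all sound and complete the proof.
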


      Now we begin to prove Theorem 1.

 Suppose $H$ is not constant on $\Lambda$. This means that $h$ is not constant on its  critical point set $\Lambda_0$. Note that $\Lambda_0$ is connected, so the Lebesgue measure of the set of critical values of $h$ is positive. This contradicts to Bates' improved Morse-Sard's theorem \cite{ba}.

    \section{Proof of Theorem 2}

    Of course, it is a direct consequence of Norton's improved Morse-Sard's theorem \cite{cr}. However, we present  a slightly different proof here.

      For any two points $(q_1,p_1),(q_2,p_2)$ on $\Lambda$, denote by $\beta$ the rectifiable path connects them.  Note that $\beta \in \Lambda$, and $\Lambda$ is invariant, so $dH \cdot \dot{\beta}(t)=0$, at each differential point, (here, we choose $t$ as the parameter of arc length). Thus $$H((q_2,p_2))-H((q_1,p_1))=\int dH \cdot \dot{\beta}(t)=0.$$

\section{Proof of Theorem 3}

    In \cite{w}, Whitney constructed a function $f(q) \in C^{d-1}$ on $d$ ($\geq 2$) dimension manifold $M$ such that there exists a connected set  $\Lambda_0$ with $df(q)=0$ for every $q \in \Lambda_0$, but $f$ is not constant on $\Lambda_0$. In \cite{Alec}, Norton showed more in this direction the existence of a large class of Whitney-type examples for $f \in C^{d-1,s}$ with $0 \le s <1 $.

    By using these Whitney-Norton type examples, we can construct examples Theorem 3 required.

In fact, for any $s\in [0,1)$, there exists a $C^{d-1,s}$ function $f(q)$ and a connected subset $\Lambda_0 \subset M $ such that $df(q)=0$,  $\forall q \in \Lambda_0$, but $f(q)$ is not constant on $\Lambda_0$. Moreover, we may assume that $\Lambda_0$ is contained in a coordinate neighborhood $U$, by changing $f$ outside if necessary. Shrinking $U$ if necessary, we may introduce an auxiliary $C^{\infty}$ Riemannian metric $g$ such that $g$ is Euclidiean on $U$.

    Now we define the Hamiltonian:
    $$H(q,p)=f(q)+\frac{1}{2}|p|^2,$$
    where
    $$q=(q_1,q_2,\cdots,q_d), p=(p_1,p_2,\cdots,p_d)$$
     are local coordinates of $T^*M$, and $|\cdot |$ is induced by the Riemannian metric  $g$. The Hamiltonian equation is:

$$
\dot{q}=\dfrac{\partial H(q,p)}{\partial p}=p,  \,\,\,\,\,\,
 \dot{p}=-\dfrac{\partial H(q,p)}{\partial q}=h(q).
$$

    Let $\Lambda=(\Lambda_0,0)$, then $\Lambda$ is contained in the zero section of $T^*M$. It is easy to check that $\Lambda$ is invariant under the flow $\phi^t_H$. But $H|_{\Lambda}=h|_{\Lambda_0}$ is not constant by the definition of $f$.

    \begin{rem}
    If, we take Hamiltonian to be
    $$H(q,p)=f(q)+\frac{1}{2}|p-\Gamma|^2,$$
    here $\Gamma$ is any Lagrangian graph, then the required invariant critical set $\Lambda \subset \Gamma$.
    \end{rem}

    \begin{rem}
    In this example, the invariant set $\Lambda$ consists only of fixed points. In fact, we can also construct examples such that $\Lambda$ support non-Dirac measures:

    For instance, consider the standard $4$-torus. Let $f(q_1,q_2,q_3)$ be a function of Whitney-Norton type on $3$-sub-torus, (denote the associated connected critical set by $\Lambda_1$), as discussed above. Now let the Hamiltonian be $$H(q_1,q_2,q_3,q_4,p_1,p_2,p_3,p_4)=f(q_1,q_1,q_3)+\dfrac{1}{2}(p_1^2+p_2^2+p_3^2+(p_4+1)^2),$$
    \end{rem}
    then $\Lambda_0=\Lambda_1 \times \mathbb{T}$ is the required projected invariant set, and
    $$\Lambda=\{(q_1,q_2,q_3,q_4,0,0,0,0): (q_1,q_2,q_3) \in \Lambda_1\}.$$
     Clearly, $\Lambda$ is contained in the zero section, and the Hamiltonian flow is not stationary on $\Lambda$.

\section{Problems}
In the example in Theorem 3, the section is $C^{\infty} $, but the Hamiltonian $H$ is finite smooth. It is more interesting if one can construct counterexamples with infinitely smooth Hamiltonian and finite smooth Lagrangian graph. For this purpose, we pose the following problems:

\begin{prob}
Can one construct an explicit example of $H$ of $C^{\infty}$, which admits a compact, connected invariant set $\Lambda$ in a Lagrangian graph $\Gamma$ of finite smooth, such that $H$ is not constant on $\Lambda$?
\end{prob}

We call a graph $\Gamma$ is $C^{0,s}$ Lagrangian, if $\Gamma$ coincides with a  differential of a $C^{1,s}$ function locally.  As a negative side of Proposition 1, we also pose

\begin{prob}
Can one construct an explicit example of $H$, which admits an invariant $C^{0,s}$ (here $0 \leq s <1$) Lagrangian graph $\Gamma$, such that $H$ is not constant on $\Gamma$?
\end{prob}

\begin{rem}
For Tonelli Hamiltonians, solutions of the associated Hamilton-Jacobi equation have the following nice property: a $C^1$ solution must be $C^{1,1}$, \cite{fa1}. So, if one can construct a $C^{0,s}$ $(0 \leq s <1))$, non-Lipschitz  invariant (under the flow of $\phi^t_H$, H is Tonelli Hamiltonian) Lagrangian graph $\Gamma$, then $H$ is not constant automatically.
\end{rem}
\section{Appendix}
In this appendix, we give a proof of Proposition 1, which is slightly different from \cite{So}.

Let $h$ be the function as in Theorem 1, then $h$ is a Lipschitz function on $M$, and $dh=0$ at any differentiable point. For any two points $q_0,q_1$, we can choose an absolutely continuous curve $\gamma:[0,1] \rightarrow M$ with $\gamma(0)=q_0,\gamma(1)=q_1$ and $h$ is differentiable on $\gamma$ almost everywhere. Hence, $h(q_0)=h(q_1)$. Thus,  $h$ constant on $M$, and $H$ is constant on $\Gamma$, consequently.a

\begin{ack}
The authors thank Patrick Bernard, Alain Chenciner for helpful discussions. It is Bernard suggested that the problem we consider is related to Sard's theorem.

This paper was completed when the  first author
visited Albert-Ludwigs-Universit\"{a}t as a postdoctoral researcher, supported
by a fellowship from the Alexander Von Humboldt Foundation. The first
 author would like to thank Professor V. Bangert and Mathematisches Institut at Albert-Ludwigs-Universit\"{a}t for the hospitality.
\end{ack}

\end{document}